\newcommand{\D}{\mathbb{D}}
\renewcommand{\P}{\mathbb{P}}
\newcommand{\SC}{{\mathcal{C}}}
\newcommand{\SL}{{\mathcal{L}}}
\newcommand{\SN}{{\mathcal{N}}}
\renewcommand{\SS}{{\mathcal{S}}}
\newcommand{\Z}{\mathbb{Z}}
\newcommand{\R}{\mathbb{R}}
\renewcommand{\S}{\mathbb{S}}
\renewcommand{\SC}{\mathcal{C}}
\newcommand{\Diff}{\operatorname{Diff}}
\newtheorem{proposition}{Proposition}
\newtheorem{theorem}[proposition]{Theorem}
\newtheorem{definition}[proposition]{Definition}
\newtheorem{lemma}[proposition]{Lemma}
\newtheorem{remark}[proposition]{Remark}
\theoremstyle{definition}
\newtheorem{example}[proposition]{Example}
\newenvironment{customthm}[1]
  {\innercustomthm}
  {\endinnercustomthm}
\begin{document}
\title{Overtwisted disks and Exotic Symplectic Structures}
\subjclass[2010]{Primary: 53D05, 53D10.}
\date{January, 2014}
\keywords{contact structures, overtwisted disk, exotic symplectic structure.}

\author{Roger Casals}
\address{Instituto de Ciencias Matem\'aticas CSIC--UAM--UCM--UC3M,
C. Nicol\'as Cabrera, 13--15, 28049, Madrid, Spain.}
\email{casals.roger@icmat.es}

\begin{abstract}
The symplectization of an overtwisted contact $(\R^3,\xi_{ot})$ is shown to be an exotic symplectic $\R^4$. The technique can be extended to produce exotic symplectic $\R^{2n}$ using a GPS--structure and applies to symplectizations of appropriate open contact manifolds.
\end{abstract}
\maketitle

\noindent Let $(\R^{2n},\omega_0)$ be the standard symplectic structure on $\R^{2n}$. A symplectic structure $\omega$ on $\R^{2n}$ is exotic if there exists no symplectic embedding
$$\varphi:(\R^{2n},\omega)\longrightarrow(\R^{2n},\omega_0).$$
The non--existence of embedded exact Lagrangians in $(\R^{2n},\omega_0)$ and the $h$--principle for immersions imply that $\R^{2n}$ admits an exotic symplectic structure for $n\geq2$. See Exercise b. in page 344 in \cite{Gr}.\\

A symplectic structure on $\R^2$ is symplectomorphic to the standard symplectic structure. In the case of $\R^4$ and $\R^6$ exotic symplectic structures are provided in \cite{BP} and \cite{Mu} respectively. The articles \cite{ML,SS} contain an approach to exotic Stein structures. Note that a finite type Stein manifold diffeomorphic to $\R^4$ has to be symplectomorphic to $(\R^4,\omega_0)$. The detection of exotic symplectic structures often relies on symplectic arguments, such as the study of embedded Lagrangians. See also \cite{Zu}.\\

\noindent The aim of the present article is to show that techniques in contact topology can also be used to construct and detect exotic symplectic structures. In particular the exotic symplectic structures we describe are simple and explicit. The arguments we provide use known obstructions to fillability. See \cite{Ni,NP}. The proofs in these articles require pseudo--holomorphic curves. This is the only place where non--elementary contact topology is invoked. The main result is the following

\begin{theorem}\label{thm:main}
Let $(\R^3,\xi_{ot})$ be an overtwisted contact structure, then the symplectization $\SS(\R^3,\xi_{ot})$ endows $\R^4$ with an exotic symplectic structure.
\end{theorem}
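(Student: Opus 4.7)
My approach is by contradiction: suppose there exists a symplectic embedding $\varphi:\SS(\R^3,\xi_{ot})\to(\R^4,\omega_0)$; I would extract from $\varphi$ a compact strong symplectic filling of a closed overtwisted contact $3$--manifold, contradicting the non--fillability results of \cite{Ni,NP}, which specialize in dimension three to Eliashberg's classical result.

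First, I would identify the relevant contact--type hypersurface. Writing the symplectic form on $\SS(\R^3,\xi_{ot})=\R^3\times\R_t$ as $d(e^t\alpha)$ with $\alpha$ a contact form for $\xi_{ot}$, the Liouville vector field $\bd_t$ is transverse to the slice $H:=\R^3\times\{0\}$ and induces on it the contact structure $\xi_{ot}$. Consequently, $\varphi(H)\subset(\R^4,\omega_0)$ is a properly embedded contact--type hypersurface contactomorphic to $(\R^3,\xi_{ot})$, with associated Liouville vector field $V:=\varphi_*\bd_t$ transverse to it.

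Next, I would localize to produce a compact filling. Choose an overtwisted disk $D\subset(\R^3,\xi_{ot})$ and, by Giroux's $C^\infty$--density of convex surfaces, a smooth closed $3$--ball $B\subset\R^3$ with $D\subset\operatorname{int}(B)$ and $\bd B$ convex for $\xi_{ot}$. Glue $(B,\xi_{ot}|_B)$ along $\bd B$ to a standard tight Darboux ball $(C,\xi_C)$ with matching dividing set to form a closed overtwisted contact $3$--manifold $(Y,\xi_Y):=B\cup_{\bd B}C$. On the symplectic side, since $\varphi(B\times\{0\})$ is compact it lies in a large standard Liouville ball $B_R\subset(\R^4,\omega_0)$. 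For $T\gg 0$, the compact region $\varphi(B\times[-T,0])\subset B_R$ should, after smoothing corners along $\varphi(\bd B\times\{0\})$ and along $\varphi(B\times\{-T\})$ via Giroux normal form, yield a compact exact symplectic submanifold $\Omega\subset(\R^4,\omega_0)$ with contact--type boundary $(B,\xi_{ot}|_B)$. Gluing $\Omega$ to the standard Weinstein $4$--ball filling $(F,\omega_F)$ of $(C,\xi_C)$ along $\bd B$ then produces a compact exact strong symplectic filling $(W,\omega_W)$ of the overtwisted $(Y,\xi_Y)$. This contradicts \cite{Ni,NP} and completes the argument.

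The main technical difficulty lies in the construction of $\Omega$: one must identify a genuinely compact $4$--dimensional region bounded by the ``top'' face $\varphi(B\times\{0\})$, the ``bottom'' face $\varphi(B\times\{-T\})$, and the ``vertical'' sides $\varphi(\bd B\times[-T,0])$, then round the two corners so that the result has a single smooth contact--type boundary contactomorphic to $(B,\xi_{ot}|_B)$ matching the Weinstein cap for $(C,\xi_C)$. The key input is that the convex surface $\bd B$ in $(\R^3,\xi_{ot})$ lifts to a ``vertical'' symplectic hypersurface $\bd B\times\R$ in the symplectization that, after Giroux normal form, is modeled on a standard contact handle---this is exactly where convex surface theory converts the problem to an elementary symplectic gluing. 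The only non--elementary input is thus the pseudo--holomorphic machinery of \cite{Ni,NP}, used purely as a black box to exclude the filling $(W,\omega_W)$.
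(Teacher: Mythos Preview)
Your construction of $\Omega$ has a genuine gap. The region $\varphi(B\times[-T,0])$ is a piece of a symplectization, hence a trivial symplectic cobordism: the Liouville field $V=\varphi_*\partial_t$ points outward along the top face $\varphi(B\times\{0\})$ but \emph{inward} along the bottom face $\varphi(B\times\{-T\})$. Corner rounding can smooth the edges $\varphi(\partial B\times\{0\})$ and $\varphi(\partial B\times\{-T\})$, but it cannot reverse the transversality direction of $V$ along an entire $3$--dimensional face, nor make that face disappear. After smoothing, $\Omega$ still carries a concave boundary piece contactomorphic to $(B,\xi_{ot}|_B)$, and capping it would require precisely a convex filling of that overtwisted piece --- which is the very thing in question. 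The subsequent gluing is also ill-posed: $C$ is a $3$--ball, not a closed contact manifold, so there is no ``Weinstein $4$--ball filling of $(C,\xi_C)$'' in the usual sense; and even under a manifold-with-corners reading, both the top face $B\subset\partial\Omega$ and $C\subset\partial F$ are \emph{convex} boundary pieces, so no symplectic gluing along their common $\partial B$ is available. In short, you have produced a cobordism with an overtwisted concave end, not a filling, and nothing in the argument closes it off.

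The paper avoids these difficulties by not attempting to build a compact $4$--dimensional filling at all. Instead it contactizes: a symplectic embedding $\SS(\R^3,\xi_{ot})\hookrightarrow(\R^4,\omega_0)\cong\SS(\R^3,\xi_0)$ induces a contact embedding $\SC\SS(\R^3,\alpha_{ot})\hookrightarrow\SC\SS(\R^3,\alpha_0)\cong(\R^5,\xi_0)\subset(\S^5,\xi_0)$. Lemmas~\ref{lem:m} and~\ref{lem:s} together with Proposition~\ref{pro:GPS} show that $\SC\SS(\R^3,\alpha_{ot})$ contains arbitrarily large neighborhoods $(\R^3\times\D^2(R),\ker\{\alpha_{ot}+r^2d\theta\})$ carrying a GPS--structure, so $(\S^5,\xi_0)$ would inherit one, contradicting its exact fillability by Theorem~\ref{thm:np}. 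The compactification problem is thus handled on the contact side, inside the already closed and already fillable $(\S^5,\xi_0)$, rather than by an ad hoc $4$--dimensional construction.
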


\begin{example}\label{ex:ot}
Let $(\rho,\varphi,z)\in\R^3$ be cylindrical coordinates and $(\R^3,\xi_1)$ the contact structure defined by the kernel of the contact form
$$\alpha_1=\cos\rho dz+\rho\sin\rho d\varphi.$$
Consider the symplectic $2$--form $\omega_1=d(e^t\alpha_1)$ on $\R^4\cong\R^3\times\R(t)$. Then $(\R^4,\omega_1)$ is an exotic symplectic structure.\hfill$\Box$
\end{example}
The arguments we use apply to several open contact manifolds. For instance:
\begin{theorem}\label{thm:openmfld}
Let $(M,\xi)$ be an exact symplectically fillable contact 3--fold and $(U,\xi)\subset (M,\xi)$ an open contact submanifold. Consider an overtwisted contact structure $(U,\xi_ {ot})$. Then $\SS(U,\xi)$ is not symplectomorphic to $\SS(U,\xi_{ot})$.
\end{theorem}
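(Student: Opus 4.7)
The plan is to reduce Theorem~\ref{thm:openmfld} to the same pseudo-holomorphic disk input from \cite{Ni,NP} that underlies Theorem~\ref{thm:main}, by placing the putative symplectomorphism inside the Liouville completion of the given exact filling. Let $(W,d\lambda)$ be the exact filling of $(M,\xi)$ with $\lambda|_{\partial W}=\alpha$ a contact form for $\xi$, and let $\hat W = W\cup_M(M\times[0,\infty),d(e^t\alpha))$ be its Liouville completion. Then $\hat W$ is exact, symplectically aspherical, and carries a complete Liouville vector field $Z$.

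The first step is to realise the full symplectization $\SS(M,\xi)$ as an open symplectic submanifold of $\hat W$. Define $\Psi\colon M\times\R\to\hat W$ by $\Psi(x,t)=\phi^Z_t(x)$, where $x\in M$ is identified with the corresponding point of $\partial W\subset\hat W$. This is globally defined by completeness of $Z$, and injective because $M$ is transverse to $Z$ throughout $\hat W$. Using the universal identity $\tilde\lambda(Z)=0$, which follows from $\iota_Z\iota_Z d\tilde\lambda=0$ together with $\iota_Z d\tilde\lambda=\tilde\lambda$, one computes $(\phi^Z_t)^*\tilde\lambda=e^t\tilde\lambda$ and hence $\Psi^*\tilde\lambda=e^t\alpha$, so $\Psi$ is a symplectic embedding. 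Restricting to $U\times\R$ gives a symplectic embedding $\SS(U,\xi)\hookrightarrow\hat W$.

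Next, suppose for a contradiction that $\Phi\colon\SS(U,\xi_{ot})\to\SS(U,\xi)$ is a symplectomorphism, and pick an overtwisted disk $D\subset(U,\xi_{ot})$. The slice $U\times\{0\}\subset\SS(U,\xi_{ot})$ is a contact-type hypersurface inducing the contact structure $\xi_{ot}$, so $\Phi(U\times\{0\})$ is a contact-type hypersurface of $\SS(U,\xi)$ whose induced contact structure is contactomorphic to $(U,\xi_{ot})$, and $\Phi(D)$ is an overtwisted disk on it. Composing with $\Psi$ places this overtwisted disk on a contact-type hypersurface inside the aspherical exact manifold $\hat W$. A Bishop family of pseudo-holomorphic disks bounding $\Phi(D)$, for an almost complex structure tamed by $\omega_{\hat W}$ and cylindrical on the positive end, yields a moduli space whose compactness (forced by exactness of $\hat W$) is incompatible with its boundary topology, exactly as in Theorem~\ref{thm:main} with $\hat W$ replacing $(\R^4,\omega_0)$. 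This gives the desired contradiction.

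The main technical obstacle is the construction of $\Psi$ in the first step: the completion $\hat W$ only natively contains the positive half of the symplectization as a subset, and naive translation in the $t$-variable is merely conformal rather than symplectic. The Liouville flow genuinely does the job thanks to the pointwise identity $\tilde\lambda(Z)=0$, valid on any exact symplectic manifold. Once $\Psi$ is in place, the appeal to \cite{Ni,NP} is identical to the one used for Theorem~\ref{thm:main}.
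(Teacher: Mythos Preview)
Your first step—embedding $\SS(U,\xi)$ into the completed filling $\hat W$ via the Liouville flow—is correct and parallels what the paper does. The gap is in the second step.

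The Bishop-family argument you invoke requires the overtwisted disk to sit on a \emph{closed} contact hypersurface bounding a compact (or at least geometrically controlled) region: the holomorphic disks live in that filling, $J$-convexity of the boundary confines them there, and compactness of the moduli follows. In your setup the hypersurface $\Psi\circ\Phi(U\times\{0\})$ is an open $3$-manifold sitting in the interior of $\hat W$; it bounds nothing, there is no distinguished ``filling side'', and nothing prevents the Bishop disks from crossing the hypersurface or interacting with its non-compact ends. Exactness of $\hat W$ and a cylindrical $J$ at infinity do give energy bounds and confine curves to a compact subset of $\hat W$, but they do not reproduce the one-sided confinement the classical argument needs. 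Your appeal to ``exactly as in Theorem~\ref{thm:main}'' is also a misreading: the proof of Theorem~\ref{thm:main} in this paper does \emph{not} run a direct four-dimensional Bishop family inside $(\R^4,\omega_0)$; it passes to the five-dimensional contactization and uses a GPS-structure there.

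The paper resolves precisely this difficulty by contactizing. One builds a \emph{closed} contact $5$-fold $(X,\Xi)$ by gluing $\SC(W,\lambda)=W\times\S^1$ to $M\times\D^2$ along their common boundary $M\times\S^1$; this $(X,\Xi)$ admits an exact symplectic filling. A hypothetical symplectomorphism $\SS(U,\xi_{ot})\cong\SS(U,\xi)$ then produces a contact embedding of $\SC\SS(U,\xi_{ot})$—which carries a GPS-structure by Proposition~\ref{pro:GPS}—into $(X,\Xi)$, contradicting Theorem~\ref{thm:np}. The contactization step is exactly what manufactures a closed ambient contact manifold on which the pseudo-holomorphic obstruction from \cite{Ni,NP} can be applied.
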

The same techniques allow us to prove similar results in higher--dimensions. In particular we prove that the exotic symplectic structures obtained in Theorem \ref{thm:main} are stable.
\begin{theorem}\label{thm:stable}
Let $(\R^3,\xi_{ot})$ be an overtwisted contact structure and $(\SS(\R^3,\xi_{ot}),\omega_{ot})$ its symplectization. Then $(\SS(\R^3,\xi_{ot})\times\R^{2n-4},\omega_{ot}+\omega_0)$ endows $\R^{2n}$ with an exotic symplectic structure.
\end{theorem}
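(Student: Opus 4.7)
The plan is to adapt the proof of Theorem \ref{thm:main} to higher dimensions, replacing the overtwisted disk and the three--dimensional non--fillability result of \cite{Ni} by a GPS--structure and its higher--dimensional counterpart from \cite{NP}. Write $(M,\omega):=(\SS(\R^3,\xi_{ot})\times\R^{2n-4},\omega_{ot}+\omega_0)$; this is an exact symplectic manifold with Liouville form $\lambda=e^t\alpha_{ot}+\lambda_0$, where $\lambda_0=\tfrac12\sum(x_idy_i-y_idx_i)$ is the standard Liouville form on $\R^{2n-4}$, and Liouville vector field $Z=\partial_t+Z_0$ with $Z_0$ the radial field on $\R^{2n-4}$. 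The hypersurface $H=\{t=0\}\times\R^{2n-4}$ is transverse to $Z$ and thus of contact type in $(M,\omega)$, with induced contact form $\alpha_{ot}+\lambda_0$.

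Inside the contact manifold $(H,\ker(\alpha_{ot}+\lambda_0))$ one exhibits a GPS--structure coming from the overtwisted disk. Let $D\subset(\R^3,\xi_{ot})$ be an overtwisted disk, and let $L\subset\R^{2n-4}$ be a closed Lagrangian ball contained in the Lagrangian plane $\{y=0\}$. Since $\lambda_0$ vanishes identically on $\{y=0\}$, the pull--back of $\alpha_{ot}+\lambda_0$ to $D\times L$ equals $\alpha_{ot}|_D$, and the characteristic foliation of $D\times L$ decomposes as the product of the characteristic foliation of $D$ in $(\R^3,\xi_{ot})$ with $L$. The leaves are therefore Legendrian and the core is $L$, making $D\times L$ a Plastikstufe--type submanifold that furnishes the desired GPS--structure.

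Suppose for contradiction that a symplectic embedding $\varphi:(M,\omega)\to(\R^{2n},\omega_0)$ exists. Following the strategy of Theorem \ref{thm:main}, one constructs a compact exact symplectic domain $W\subset M$ whose boundary $\partial W$ is a closed contact--type hypersurface of $(M,\omega)$ containing $D\times L$ in its contact boundary. The image $\varphi(W)\subset(\R^{2n},\omega_0)$ is then a strong symplectic filling of the closed contact manifold $\varphi(\partial W)$, which carries the image of the GPS--structure $D\times L$. This contradicts the non--fillability result of \cite{NP}.

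The principal difficulty is the construction of the compact domain $W$: the Liouville field $Z=\partial_t+Z_0$ has no zeros, so $(M,\omega)$ admits no domain star--shaped with respect to $Z$. The idea is to interpolate between a Liouville collar of $D\times L$ in $H$, modelled on $H\times(-\varepsilon,\varepsilon)$ via the flow of $\partial_t$, and a cut--off of the Liouville structure on $\R^{2n-4}$ outside a large ball containing $L$, so as to produce a Liouville field on $W$ that is outward--pointing along $\partial W$ while keeping $D\times L$ as a submanifold of the contact boundary. This generalises the compactification performed in the proof of Theorem \ref{thm:main}.
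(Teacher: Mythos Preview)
Your approach diverges from the paper's and, as written, has two genuine gaps.

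First, the object $D\times L$ you construct is not a GPS--structure (Definition~\ref{def:gps}) nor a Plastikstufe. A Plastikstufe is modelled on $\D^2\times B$ with $B$ a \emph{closed} manifold; the GPS in this paper is an immersion of $\S^1\times\D^2$. You take $L$ to be a closed ball in the Lagrangian plane $\{y=0\}\subset\R^{2n-4}$, so $L\cong\D^{n-2}$ has boundary, and $D\times L$ is a manifold with corners whose ``core'' $\{0\}\times L$ is a disk. The Bishop--family pseudoholomorphic curve arguments of \cite{Ni,NP} require the core to be closed, so this object does not obstruct fillability. (There are no closed Lagrangians in $(\R^{2n-4},\omega_0)$, so you cannot simply replace $L$ by a closed one.) The contact hypersurface $H=(\R^{2n-1},\ker(\alpha_{ot}+\lambda_0))$ \emph{does} contain a GPS, but it is the immersed $\S^1\times\D^2$ coming from Proposition~\ref{pro:GPS}, not a product $D\times L$.

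Second, the construction of the compact domain $W$ with convex contact boundary containing the GPS is only sketched, and you yourself flag it as ``the principal difficulty''. The interpolation you describe between the Liouville collar of $H$ and a cut--off of $Z_0$ is not carried out, and it is unclear that it can be done so that $\partial W$ is smooth, closed, of contact type, and still contains the (correctly defined) GPS. You also invoke only a \emph{strong} filling of $\varphi(\partial W)$, whereas Theorem~\ref{thm:np} rules out \emph{exact} fillings; you would need to argue exactness separately.

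The paper sidesteps both issues. It first exhibits an explicit diffeomorphism
\[
f(\rho,\varphi,z,t;\rho_1,\varphi_1,\ldots)=(\rho,\varphi,z,t;e^{t/2}\rho_1,\varphi_1,\ldots)
\]
showing that $(\SS(\R^3,\xi_{ot})\times\R^{2n-4},\omega_{ot}+\omega_0)$ is symplectomorphic to the symplectization $\SS(\R^{2n-1},\alpha_{ex})$, where $\alpha_{ex}=\alpha_{ot}+\sum\rho_i^2d\varphi_i$. Then Proposition~\ref{prop:higher} (the higher--dimensional analogue of Theorem~\ref{thm:main}) applies: a hypothetical symplectic embedding into $(\R^{2n},\omega_0)$ would induce a contact embedding $\SC\SS(\R^{2n-1},\alpha_{ex})\hookrightarrow(\S^{2n+1},\xi_0)$, and the GPS in $(\R^{2n-1}\times\D^2(R),\alpha_{ex}+r^2d\theta)\subset\SC\SS(\R^{2n-1},\alpha_{ex})$ then lands in the exactly fillable $(\S^{2n+1},\xi_0)$, contradicting Theorem~\ref{thm:np}. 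The contactization trick (Lemmas~\ref{lem:m} and~\ref{lem:s}) is precisely what replaces your attempted construction of $W$: it produces the needed closed fillable ambient contact manifold without having to build a compact convex domain inside $(M,\omega)$.
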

The appropriate analogue of Theorem \ref{thm:openmfld} also holds for higher dimensions.\\

I have been informed that Y. Chekanov may have a different argument for Theorem \ref{thm:main}. K. Niederkr\"uger explained to me that one can use bLobs as a generalization for the GPS--structure. The paper is organized as follows. Sections \ref{sec:pre} and \ref{sec:ot} introduce the ingredients used to prove the above results. The proof of Theorem \ref{thm:main} is detailed in Section \ref{sec:sympOT}.
Section \ref{sec:EX} contains the proof of Theorems \ref{thm:openmfld} and \ref{thm:stable} .\\

{\bf Acknowledgements.} This article stems from a discussion with S. Courte and E. Giroux at ENS Lyon. I am very grateful for their hospitality. I also thank K. Niederkr\"uger and F. Presas for useful discussions. The present work is part of the author activities within CAST, a Research Network Program of the European Science Foundation.

\section{Preliminaries}\label{sec:pre}

\subsection{Contact structures on $\R^3$}\label{ssec:contR3}

The study of contact structures in $\R^3$ yielded to foundational work in contact topology. The first step towards an isomorphism classification was the distinction between the standard contact structure on $\R^3$ and the overtwisted contact structure described in Example \ref{ex:ot}. This is the work of D. Bennequin in \cite{Be}. The isomorphism classification of contact structures on $\R^3$ is completed after the seminal work of Y. Eliashberg in \cite{E1,E2,E3}.\\

\noindent The standard contact structure $\xi_0$ on $\R^{3}(\rho,\varphi,z)$ is defined as the kernel of the contact form
$$\alpha_0=dz+\rho^2d\varphi.$$
This is a normal form of any contact 1--form in a sufficiently small neighborhood of a point in a contact 3--fold.\\

The contact structure $\xi_1$ induced by the contact form
$\alpha_1=\cos\rho dz+\rho\sin\rho d\varphi$
contains an overtwisted disk $\Delta=\{(\rho,\varphi,z):\rho\leq\pi,z=0\}$.
 The arguments in \cite{Be} imply that $(\R^3,\xi_0)$ and $(\R^3,\xi_1)$ are not contactomorphic.\\

Consider the $3$--sphere $\S^3$. The main result in \cite{E1} implies the existence of a unique overtwisted contact structure in each homotopy class of plane distribution on $\S^3$. There are $H^3(\S^3,\pi_3(\R\P^2))=\Z$ homotopy classes. Denote by $\zeta_k$ the overtwisted contact structure in the homotopy class identified with $k\in\Z$. Then $\zeta_k$ restricted to $\S^3\setminus\{p\}$, $p\in\S^3$, defines an overtwisted contact structure on $\R^3$. It will still be denoted $\zeta_k$. The classification result in \cite{E3} is the following

\begin{theorem}
Each contact structure on $\R^3$ is isotopic to one of the structures $\xi_0$, $\xi_1$ or $\zeta_k$, for $k\in\Z$. These structures are pairwise non--contactomorphic.
\end{theorem}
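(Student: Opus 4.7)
The plan is to reduce this classification to Eliashberg's theorems on $\S^3$ by analyzing the behavior of $\xi$ near infinity. As a first step I would invoke the tight/overtwisted dichotomy: by Bennequin \cite{Be} the standard $\xi_0$ is tight while $\xi_1$ and each $\zeta_k$ are overtwisted by construction, and the existence of an overtwisted disk is a contactomorphism invariant, so the two families are automatically separated. For the tight branch, to show that any tight $\xi$ on $\R^3$ is isotopic to $\xi_0$, I would exhaust $\R^3$ by an increasing sequence of balls with convex boundary and apply iteratively Eliashberg's uniqueness theorem \cite{E2} for tight contact structures on the $3$--ball with prescribed characteristic foliation on the boundary; standard bootstrapping of the compatible contactomorphisms on the exhausting balls then yields a global isotopy from $\xi$ to $\xi_0$.

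For the overtwisted branch, I would distinguish according to whether the overtwisted disks can be confined to a compact set. If there is a compact $K\subset\R^3$ with $\xi|_{\R^3\setminus K}$ tight, then a tight ball model on the collar of the end allows one to extend $\xi$ to an overtwisted contact structure on $\S^3$; by Eliashberg's $h$--principle \cite{E1} this extension is isotopic to a $\zeta_k$ for the unique $k\in\Z$ encoding the homotopy class of plane distributions, and removing the point at infinity returns the original $\xi$. If on the contrary no such $K$ exists, overtwisted disks accumulate at infinity and a proper version of the $h$--principle reduces $\xi$ to the explicit model $\xi_1$, in which overtwisted disks appear at each radius $\rho=(2j+1)\pi/2$, yielding a sequence of disjoint overtwisted disks escaping every compact set.

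The distinctness statements then follow from the classifying invariants: tight is separated from overtwisted by the contact invariance of the overtwisted disk; different $\zeta_k$ are separated by the $\Z$--valued homotopy invariant of their $\S^3$--extensions, which is a contactomorphism invariant by \cite{E1}; and $\xi_1$ is distinguished from every $\zeta_k$ since it cannot be compactified at infinity through a tight end, that is, the negation of the confinement property used above is a contactomorphism invariant. The main obstacle I expect is the overtwisted-at-infinity case: one must carefully run Eliashberg's $h$--principle in the proper category and verify that ``overtwisted at infinity'' is itself a well-defined isotopy invariant that uniquely characterizes $\xi_1$, which is the technical core of \cite{E3} and the only place where one genuinely leaves the compact setting.
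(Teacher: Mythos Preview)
The paper does not give its own proof of this theorem: it is stated in the preliminaries (Subsection~\ref{ssec:contR3}) as ``the classification result in \cite{E3}'' and quoted without argument. There is therefore nothing in the paper against which to compare your sketch; the statement is used as a black box from Eliashberg's work.

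That said, your outline is a faithful summary of how the proof in \cite{E1,E2,E3} is actually organized: the tight/overtwisted dichotomy separates $\xi_0$ from the rest; uniqueness of the tight structure on $\R^3$ follows from Eliashberg's theorem on the tight $3$--ball via an exhaustion; and the overtwisted structures are sorted by whether the end is tight or overtwisted, the former case compactifying to $(\S^3,\zeta_k)$ and being classified by the homotopy invariant $k\in\Z$, the latter yielding $\xi_1$. One small correction: for the model $\alpha_1=\cos\rho\,dz+\rho\sin\rho\,d\varphi$ the Legendrian circles on $\{z=0\}$ sit at $\rho=k\pi$ (where $\rho\sin\rho=0$), not at $\rho=(2j+1)\pi/2$. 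Your closing caveat is apt: the ``overtwisted at infinity'' case and its invariance are exactly the technical core of \cite{E3}, which is why the present paper cites the result rather than reproving it.
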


Thus the overtwisted disk $\Delta\subset(\R^3,\xi_1)$ is the local model in a neighborhood of any overtwisted disk. That is, any small ball containing an overtwisted disk in a contact 3--fold is necessarily contactomorphic to $(\R^3,\alpha_1)$.\\

The symplectic structures we consider in this article are constructed with a contact structure. The procedures we use to obtain a symplectic manifold from a contact manifold and viceversa will be explained in the following subsections. This material can be found in \cite{AG}.

\subsection{Symplectization}\label{ssec:symp} Let $(M,\xi)$ be a contact manifold and $\SS(M,\xi)$ be the subbundle of the cotangent bundle $\pi:T^*M\longrightarrow M$ whose fibre at a point $p\in M$ consists of all non--zero linear functions on the tangent space $T_pM$ which vanish on the contact hyperplane $\xi_p\subset T_pM$ and define its given coorientation. Giving $\SS(M,\xi)$ as a subbundle of the cotangent bundle $T^*M$ is tantamount to endowing $M$ with a contact structure.\\

Consider the Liouville 1--form $\lambda$ on $T^*M$, the 2--form $d\lambda$ restricts to a symplectic structure on $\SS(M,\xi)$.

\begin{definition}\label{def:symp}
The symplectization of $(M,\xi)$ is the exact symplectic manifold
$$(\SS(M,\xi),d\lambda|_{\SS(M,\xi)}).$$
\end{definition}

\noindent In our perspective the primitive is not part of the data, only the symplectic structure is. In the study of Liouville domains the primitive is also part of the structure of a symplectization. This is not the case. The bundle $\pi:\SS(M,\xi)\longrightarrow M$ is a trivial principal $\R^+$--bundle. The sections of $\pi$ are contact forms for the contact structure $\xi$. A choice of contact form $\alpha$ defines a trivialization $\SS(M,\xi)\cong M\times\R^+(t)$. In terms of this splitting $\lambda|_{\SS(M,\xi)}=t\alpha$. In case a contact form $\alpha$ has been given to $(M,\xi)$, its symplectization $\SS(M,\xi)$ will also be denoted by $\SS(M,\alpha)$. Contactomorphic contact manifolds yield symplectomorphic symplectizations.\\

\noindent In this article $\R^{2n+2}$ is identified with the total space of $\SS(\R^{2n+1},\alpha)$. This is done with the diffeomorphism $e:\R(t)\longrightarrow\R^+(t)$, $e(t)=e^t$. The use of $t\in\R^+$ is more convenient since we consider $t$ to be a radius in certain polar coordinates of an annulus. The coordinate $e^t\in\R^+$ shall sometimes be used, as in the following example.\\

\begin{example}\label{ex}
Consider $\R^{2n+1}$ with coordinates $(x_1,y_1,\ldots,x_n,y_n,z)=(\rho_1,\varphi_1,\ldots,\rho_n,\varphi_n,z)$ and endowed with the contact form
$$\alpha_0=dz+\sum_{i=1}^n \rho_i^2d\varphi_i.$$
Its symplectization is the symplectic manifold $(\R^{2n+1}\times\R(t),d(e^t\alpha_0))$. This is symplectomorphic to the standard symplectic $(\R^{2n+2},\omega_0)$ where $\omega_0=\sum_{i=1}^n dx_i\wedge dy_i+dt\wedge dz$. Indeed, consider the contact form
$\widetilde{\alpha}_0=dz-\sum_{i=1}^n y_i\cdot dx_i$ on $\R^{2n+1}$. It is readily seen that $(\R^{2n+1},\ker\alpha_0)\cong(\R^{2n+1},\ker\widetilde\alpha_0)$.
Then the diffeomorphism
$$f:\R^{2n+2}\longrightarrow\R^{2n+2},\quad f(x_1,y_1,\ldots,x_n,y_n,z,t)=(x_1,e^ty_1,\ldots,x_n,e^ty_n,e^tz,t)$$
pulls--back the standard symplectic form to
$$f^*\left(\sum_{i=1}^n dx_i\wedge dy_i+dt\wedge dz\right)=\sum_{i=1}^n \left(e^t dx_i\wedge dy_i+e^ty_i\cdot dx_i\wedge dt\right)+e^t dt\wedge dz=$$
$$=d\left(e^tdz-\sum_{i=1}^n e^ty_i\cdot dx_i)\right)=d(e^t\widetilde\alpha_0).$$
Hence $\SS(\R^{2n+1},\alpha_0)\cong(\R^{2n+2},\omega_0)$. The permutation in the variables $(z,t)$ has its geometric origin in the dichotomy between convexity and concavity. Confer Section \ref{ssec:diskCS}.
\end{example}

\begin{remark}\label{rmk:sphere}
The contact structure $\xi_0=\ker\alpha_0$ on $\R^{2n+1}$ extends to a contact structure $(\S^{2n+1},\xi_0)$ via the one point compactification.
\end{remark}

\noindent It is a natural question whether $\SS(\R^3,\alpha_0)$ and $\SS(\R^3,\alpha_1)$ are symplectomorphic. A symplectic topology proof could be finding exact Lagrangian tori in $\SS(\R^3,\alpha_1)$, since these do not exist in $\SS(\R^3,\alpha_0)$. Such a Lagrangian tori would also distinguish the symplectomorphism type of $\SS(\R^3,\alpha_0)$ and $\SS(\R^3,\zeta_k)$, $k\in\Z$. Instead, we shall use contact topology.\\

Note also that the classic symplectic invariants such as volume, width and symplectic capacities are necessarily infinite in the symplectization of a contact manifold.

\subsection{Contactization}\label{ssec:cont} Let $(V,\lambda)$ be an exact symplectic manifold with a Liouville 1--form $\lambda$.
\begin{definition}\label{def:cont}
The contactization $\SC(V,\lambda)$ of $(V,\lambda)$ is the contact manifold $(V\times\R(s),\lambda-ds)$.
\end{definition}
\noindent Note that a different choice of primitive $\lambda$ for the symplectic structure $d\lambda$ on $V$ may lead to a different contact structure on $V\times\R$. In case there exists a function $f:V\longrightarrow\R$ such that $\lambda_0-\lambda_1=df$, the map
$$F:\SC(V,\lambda_0)\longrightarrow\SC(V,\lambda_1),\quad F(p,s)=(p,s-f(p))$$
is a strict contactomorphism. Note that for $V=\R^{2n}$, or more generally $H^1(V;\R)=0$, such a potential $f$ exists.\\

\noindent The coordinate $s\in\R$ in $V\times\R(s)$ can be considered to be an angle $s\in\S^1$. In particular, the contactization $\SC(V,\lambda)$ can be compactified to $V\times\S^1(s)$. This compactification is also referred to as the contactization of $(V,\lambda)$.

\subsection{Contact fibration of $\SC\SS(M,\xi)$ over $\D^2$}\label{ssec:diskCS} Let $(M,\xi)$ be a contact manifold and $\alpha$ an associated contact form. The symplectization $\SS(M,\alpha)\cong(M\times\R^+(t),d(t\alpha))$ is an exact symplectic manifold. Thus $\SC\SS(M,\xi)$ is defined, the choice of Liouville form in this case is $\lambda=t\alpha$. The underlying smooth manifold $M\times\R^+(t)\times\R(s)$ can be compactified to $M\times\R^+(t)\times\S^1(s)$. Then the coordinates $(t,s)$ can be considered to be polar coordinates on $\R^2\setminus\{0\}$ and projection onto the latter two factors defines a smooth fibration
$$\pi:M\times\R^+(t)\times\S^1(s)\longrightarrow\R^2\setminus\{0\}.$$
A smooth fibration $p:X\longrightarrow B$ is said to be contact for a codimension--1 distribution $\xi\subset TX$ if $\xi$ restricts to a contact structure on any fibre. The map $p=\pi$ satisfies this condition for the natural contact structure on $\SC\SS(M,\xi)$.

\begin{proposition}\label{prop:contfib}
The smooth fibre bundle
$$\pi:M\times\R^+\times\S^1\longrightarrow\R^2\setminus\{0\},\quad (p,t,s)\longmapsto (t,s).$$
is a contact fibration for $\xi=\ker\{t\alpha-ds\}$. There exists a diffeomorphism $G$ between contact fibrations such that
$$\xymatrix{
(M\times\R^+\times(0,2\pi),\ker\{\alpha+r^2d\theta\})\ar@{->}[r]^{G} \ar@{->}[d]_p & (M\times\R^+\times\R,\ker\{t\alpha-ds\})\ar@{->}[d]_p\\
\R^+\times(0,2\pi)\ar@{->}[r]^{\cong} & \R^+\times\R
}$$
is commutative, the map $p$ being in both cases the projection onto the rightmost two factors.
\end{proposition}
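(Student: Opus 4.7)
The plan is to handle the two claims separately, both by direct computation.

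For the assertion that $\pi$ is a contact fibration, I would restrict the contact form $t\alpha - ds$ to an arbitrary fibre $\pi^{-1}(t_0,s_0) = M\times\{(t_0,s_0)\}$. On such a fibre both $dt$ and $ds$ vanish, so the restriction is $t_0\,\alpha$, a positive multiple of the contact form $\alpha$; hence its kernel is the contact structure $\xi$ on $M$, which is all that is required.

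For the existence of $G$, I would look for a diffeomorphism preserving the fibre structure, of the form $G(p,r,\theta) = (p, T(r,\theta), S(r,\theta))$, so that the diagram automatically commutes with base map $(r,\theta)\mapsto (T,S)$. Computing
$$G^*(t\alpha - ds) = T(r,\theta)\,\alpha - (\partial_r S)\,dr - (\partial_\theta S)\,d\theta$$
and demanding that this be a positive conformal multiple of $\alpha + r^2 d\theta$ forces $\partial_r S = 0$ (so $S = S(\theta)$) and $T(r,\theta)\cdot r^2 = -S'(\theta)$, together with the sign $S'<0$ to ensure $T>0$. Any such choice yields a valid $G$, since then $G^*(t\alpha - ds) = T\cdot(\alpha + r^2 d\theta)$ and the two contact kernels coincide.

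To make the bottom arrow of the diagram a diffeomorphism onto all of $\R^+\times\R$ rather than merely onto its image, I would pick $S$ to be a monotone diffeomorphism $(0,2\pi)\to\R$, for instance $S(\theta) = \cot(\theta/2)$, and correspondingly set $T(r,\theta) := -S'(\theta)/r^2 = 1/(2 r^2\sin^2(\theta/2))$. Then the induced base map is a bijection with everywhere non-singular Jacobian, hence a diffeomorphism, and $G$ itself is a diffeomorphism because it acts as the identity on the $M$-factor. The only non-routine step is spotting the ansatz: recognizing that $G$ should act trivially on $M$ and that $S$ should depend on $\theta$ alone; once this is seen the rest reduces to solving a first-order ODE in $S$ and is entirely elementary.
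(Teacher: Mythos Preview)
Your argument is correct and follows essentially the same route as the paper: verify the fibre restriction directly, then construct $G$ of the form $(\mathrm{id},g)$ with $g$ a diffeomorphism of the base, checking that the pullback of $t\alpha-ds$ is a positive multiple of $\alpha+r^2d\theta$. The only difference is cosmetic: the paper writes down a specific formula $s=\tan(\theta/4)$, $1/t=-4\cos^2(\theta/4)\,r^2$ and checks the pullback identity, whereas you derive the general constraint $S=S(\theta)$, $T=-S'(\theta)/r^2$ first and then specialise to $S(\theta)=\cot(\theta/2)$. Your derivation has the minor advantage that it makes the surjectivity onto $\R^+\times\R$ transparent (since $\cot(\theta/2)$ is a decreasing diffeomorphism $(0,2\pi)\to\R$), but the underlying idea is identical.
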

\begin{proof}
The first statement is readily verified. For the second statement, consider the following change of coordinates
$$(r,\theta)\in\R^+\times(0,2\pi)\stackrel{g}{\longrightarrow}(t,s)\in\R^+\times\R,\quad 1/t=-4\cos^2\left(\theta/4\right)\cdot r^2,\quad s=\tan\left(\theta/4\right).$$
The map $g$ defines a contactomorphism
$$G=(id,g):(M\times\R^+\times(0,2\pi),\ker\{\alpha+r^2d\theta\})\longrightarrow(M\times\R^+\times\R,\ker\{t\alpha-ds\})$$
since $G^*(\alpha-(1/t)\cdot ds)=\alpha+r^2d\theta$. The map $G$ commutes with the projections.
\end{proof}

From the viewpoint of differential topology the projection $\pi$ from $M\times\R^+\times\S^1$ is appropriate. Nevertheless from a symplectic perspective the two ends $M_-=M\times\{0\}\times\{s_0\}$ and $M_+=M\times\{\infty\}\times\{s_0\}$ are quite different, for any fixed $s_0\in\S^1(s)$. The negative end $M_-$ of a symplectization is concave and the positive end $M_+$ is convex. Consider polar coordinates $(r,\theta)\in\R^2$ restricting to
$$(r,\theta)\in\R^+\times(0,2\pi)=\R^2\setminus L,$$
where $L=\{(r,\theta):r\geq0,\quad \theta=0\}$. Then the convexity of the boundary at infinity leads to the change of coordinates in Proposition \ref{prop:contfib}. This is a more natural symplectic coordinate system: the binding of the natural open book in $\SC\SS(M,\xi)$ induced by polar coordinates on the disk $\D^2(r,\theta)$ lies above the origin of the disk. It is then natural to compactify not only smoothly, but in a contact sense, the contact manifold $(M\times\R^+\times(0,2\pi),\ker\{\alpha+r^2d\theta\})$ to the contact manifold $(M\times\D^2,\ker\{\alpha+r^2d\theta\})$.
\section{Overtwisted disks and GPS}\label{sec:ot}
\noindent The concepts and results of this Section are part of the content of \cite{Ni,NP}.
\begin{definition}\label{def:gps}
Let $(M^5,\xi)$ be a contact $5$--fold and $\xi=\ker\alpha$. A GPS--structure is an immersion $\iota:\S^1\times\D^2(r,\theta)\longrightarrow M$ conforming the following properties
\begin{itemize}
\item[-] $\iota^*\alpha=f(r)d\theta$, for $f\geq0$ and $f(r)=0$ only at $r=0,1$.
\item[-] There exists $\varepsilon>0$ such that the self--intersection points are of the form
$$p_1=(s_1,r_1,\theta)\mbox{ and }p_2=(s_2,r_2,\theta),\quad r_1,r_2\in(\varepsilon,1-\varepsilon).$$
\item[-] There exists an open set with no self--intersection points joining $\S^1\times\{0\}$ and $\S^1\times\partial\D^2$.
\end{itemize}
\end{definition}

\noindent The existence of a GPS--structure partially restricts the fillability properties of the contact manifold $(M,\ker\alpha)$. In particular, we can use the main result in \cite{NP}. It implies the following
\begin{theorem}\label{thm:np} Let $(M,\ker\alpha)$ be a contact manifold with a GPS--structure. Then $(M,\ker\alpha)$ does not admit an exact symplectic filling.
\end{theorem}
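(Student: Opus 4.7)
The plan is to invoke the pseudoholomorphic curves technology of \cite{NP}. Suppose for contradiction that $(W,d\beta)$ is an exact symplectic filling of $(M,\ker\alpha)$. Form the symplectic completion $\widehat{W}=W\cup (M\times[0,\infty))$ with the natural exact symplectic structure, and choose a compatible almost complex structure $J$ which is cylindrical on the positive end and makes the image of the GPS--immersion $\iota$ totally real away from its self--intersection locus.

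First I would note that the defining relation $\iota^*\alpha=f(r)d\theta$ forces $\iota^*(d\alpha)=0$, so the immersed piece is Lagrangian wherever embedded; moreover the core $\iota(\S^1\times\{0\})$ and the outer boundary $\iota(\S^1\times\bd\D^2)$ are Legendrian because $f$ vanishes there. Thanks to the third bullet of Definition \ref{def:gps}, a neighborhood of the core is free of self--intersections, so a one--parameter Bishop family $\SM$ of small $J$--holomorphic disks with boundary on $\iota(\S^1\times\D^2)$ can be initiated exactly as in the plastikstufe case.

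The analytic heart of the argument is to track $\SM$ as the boundaries grow outward in the $r$--direction. Because $\beta$ is a global primitive and $\iota^*\alpha$ is closed, Stokes' theorem bounds the symplectic area of every disk in $\SM$ uniformly in terms of $f$, which prevents bubbling and escape to infinity by the standard SFT compactness in $\widehat W$. Hence $\SM$ is compact up to boundary degenerations, and the only possible endpoints are collapse back to the core, propagation to the outer Legendrian boundary, or interaction with the self--intersection set located in $r\in(\varepsilon,1-\varepsilon)$.

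The main obstacle is ruling out that the self--intersections obstruct the continuation of the Bishop family. Here the particular form of the self--intersection points, namely pairs $p_1=(s_1,r_1,\theta)$ and $p_2=(s_2,r_2,\theta)$ sharing the angular coordinate $\theta$, allows one to arrange $J$ so that the local picture near such a point is that of two transverse totally real branches intersecting in a codimension two set; the analytic setup of \cite{NP} applies without modification to this situation. Once this is established, $\SM$ is a compact $1$--manifold whose only boundary component is the core Legendrian circle, which has odd Euler number mod $2$, contradicting $\partial\bd\SM=\emptyset$. This is precisely the filling obstruction that Theorem \ref{thm:np} asserts.
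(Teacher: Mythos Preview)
The paper does not give its own proof of Theorem~\ref{thm:np}; the statement is quoted as a consequence of the main result of \cite{NP}, and the pseudoholomorphic curve machinery is treated there as a black box. Your sketch is an attempt to unpack that black box, and its overall architecture --- a Bishop family of $J$--holomorphic disks emanating from the Legendrian core, uniform energy control coming from exactness of the filling, and Gromov compactness producing a compact $1$--manifold with a single end --- is indeed the shape of the argument in \cite{Ni,NP}.

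There is, however, a concrete error in the geometry you invoke. From $\iota^*\alpha=f(r)\,d\theta$ one computes $\iota^*(d\alpha)=d(\iota^*\alpha)=f'(r)\,dr\wedge d\theta$, which is nonzero on the interior of the annulus; hence the image of $\iota$ is \emph{not} Lagrangian for $d\alpha$ (equivalently for the symplectic form on $\widehat W$ at a fixed level), and $\iota^*\alpha$ is \emph{not} closed. The correct picture is that the GPS is a \emph{maximally foliated} submanifold: the distribution $\ker(\iota^*\alpha)$ is integrable and its leaves --- the surfaces $\{\theta=\mathrm{const}\}$ away from $r=0,1$, together with the singular core $\{r=0\}$ and the Legendrian boundary $\{r=1\}$ --- are Legendrian. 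Consequently the energy bound for the Bishop disks cannot come from ``$\iota^*\alpha$ closed plus Stokes'' as you wrote; it comes from Stokes together with the explicit form $\iota^*\alpha=f(r)\,d\theta$ and boundedness of $f$, which controls $\int_{\partial u}\lambda$ in terms of $\max f$ and the winding of $\partial u$ in the $\theta$--coordinate. Your treatment of the totally real boundary condition and of the self--intersection locus is in the right spirit, but the Lagrangian/closed claim should be replaced by this foliated description, since that is precisely the structure the analysis in \cite{Ni,NP} uses.
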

\noindent The construction of a GPS--structure through the use of a contact fibration was introduced in \cite{Pr}. In Section 4 of \cite{NP} details for the following result are provided.
\begin{proposition}\label{pro:GPS}
Let $(\R^3,\ker\alpha_{ot})$ be an overtwisted contact structure and $(p,r,\theta)\in\R^3\times\D^2(r_0)$ polar coordinates. There exists $R\in\R^+$ sufficiently large such that the contact manifold $(\R^3\times\D^2(R),\ker\{\alpha_{ot}+r^2d\theta\})$ contains a GPS--structure.
\end{proposition}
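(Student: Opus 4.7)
The approach is to follow the contact--fibration construction of \cite{Pr,NP} and build the GPS--structure as an explicit immersion in a local model around an overtwisted disk. I would proceed in three stages, ending with verification of the remaining definitional properties.

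First, I would pass to a local model. By the classification of contact structures on $\R^3$ recalled in Section \ref{ssec:contR3}, the overtwisted structure $\alpha_{ot}$ admits an open set contactomorphic to a neighborhood of the standard overtwisted disk $\Delta=\{z=0,\,\rho\leq\pi\}\subset(\R^3,\alpha_1)$ with $\alpha_1=\cos\rho\,dz+\rho\sin\rho\,d\varphi$. On $\Delta$ the form $\alpha_1$ restricts to $\rho\sin\rho\,d\varphi$, a 1--form vanishing exactly at $\rho=0$ and $\rho=\pi$. This is the $2$--dimensional prototype for the condition $\iota^*\alpha=f(r)d\theta$ of Definition \ref{def:gps}.

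Second, I would define the candidate immersion $\iota:\S^1(s)\times\D^2(r,\theta)\to\R^3\times\D^2(R)$ in polar coordinates on source and target by
$$\iota(s,r,\theta)=\bigl(\rho(r),\,\theta-\nu s,\,0,\,\widetilde R(r),\,s+\theta\bigr),$$
where $\rho:[0,1]\to[0,\pi]$ is a smooth monotone reparameterization with $\rho(0)=0$, $\rho(1)=\pi$, $\widetilde R:[0,1]\to[0,\infty)$ is to be determined, and $\nu>0$ is a real constant. Geometrically, each disk $\{s\}\times\D^2$ is sent onto a copy of the overtwisted disk in the fiber over a radial segment in the base $\D^2(R)$, twisted by $\nu s$ in the fiber angle and by $s$ in the base angle. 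A direct computation yields
$$\iota^*\bigl(\alpha_{ot}+r^2d\theta\bigr)=\bigl(\rho(r)\sin\rho(r)+\widetilde R(r)^2\bigr)d\theta+\bigl(-\nu\rho(r)\sin\rho(r)+\widetilde R(r)^2\bigr)ds.$$
Setting the $ds$--coefficient to zero forces $\widetilde R(r)^2=\nu\rho(r)\sin\rho(r)$, which is nonnegative and vanishes only at $r=0,1$. The pullback then reduces to $\iota^*\alpha=f(r)\,d\theta$ with $f(r)=(1+\nu)\rho(r)\sin\rho(r)$, satisfying the first condition of Definition \ref{def:gps}.

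Third, I would verify the remaining properties. An intersection $\iota(s_1,r_1,\theta_1)=\iota(s_2,r_2,\theta_2)$ forces $r_1=r_2$ by monotonicity of $\rho$, and the two angular equations then give $(1+\nu)(s_1-s_2)\equiv 0\pmod{2\pi}$; choosing $\nu$ such that $1+\nu$ is irrational makes $\iota$ an embedding, so the self--intersection and defect--free--strip conditions of Definition \ref{def:gps} hold vacuously. The image lies inside $\R^3\times\D^2(R)$ precisely when $R\geq\max_{r\in[0,1]}\widetilde R(r)=\sqrt{\nu\cdot\max_{[0,\pi]}\rho\sin\rho}$, which is exactly the content of the hypothesis ``$R$ sufficiently large''.

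The main technical obstacle is to ensure that $\iota$ is a smooth immersion across the singular locus of the source polar coordinates, namely the core circle $\S^1\times\{0\}$ and the boundary $\S^1\times\partial\D^2$. Taking $\rho$ to be a smooth odd function of $r$ with $\rho'(1)=0$---for instance $\rho(r)=\tfrac{3\pi}{2}r-\tfrac{\pi}{2}r^3$---makes $\rho(r)/r$ and $\widetilde R(r)/r$ smooth functions of $r^2=x^2+y^2$ in Cartesian source coordinates, so that $\iota$ extends smoothly to the closed disk. The immersion condition at the core circle, where $d\iota(\partial_s)$ is a priori degenerate, is then achieved by the local perturbation near the core that is detailed in Section 4 of \cite{NP}.
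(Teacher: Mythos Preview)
The paper does not actually prove this proposition; it only records that the construction is carried out in Section~4 of \cite{NP}, following the contact--fibration idea of \cite{Pr}. So there is no in--paper argument to compare against. Your proposal is in the right spirit---wrapping copies of the overtwisted disk around the $\S^1$--direction of the base---but the execution has a genuine gap.

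The error is in your injectivity claim. From $\iota(s_1,r_1,\theta_1)=\iota(s_2,r_2,\theta_2)$ you correctly obtain $r_1=r_2$ and $(1+\nu)(s_1-s_2)\equiv 0\pmod{2\pi}$. But $s$ lives in $\S^1$, not in $\R$: irrationality of $1+\nu$ does \emph{not} force $s_1=s_2$. For any $\nu>0$ the value $s_1-s_2=\tfrac{2\pi}{1+\nu}\in(0,2\pi)$ solves the congruence, and one checks that the companion equation in $\theta$ is then consistent. Hence every interior point $(s,r,\theta)$ with $r\in(0,1)$ is a double point of $\iota$. This simultaneously violates the second GPS condition (your double points have $\theta_1\neq\theta_2$ and $r$ arbitrarily close to $0$ and $1$) and the third (there is no self--intersection--free strip connecting the core to the boundary). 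So the map you wrote is not a GPS--structure.

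A secondary issue: the immersion condition also fails at $r=1$, not only at the core. With $\rho'(1)=0$ and $\widetilde R(1)=0$ one has $d\iota(\partial_s)=-\nu\,\partial_\varphi$ and $d\iota(\partial_\theta)=\partial_\varphi$ at the boundary torus, so $d\iota(\partial_s+\nu\partial_\theta)=0$. The perturbation you invoke from \cite{NP} near the core does not address this.

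In the actual construction of \cite{NP,Pr} the self--intersections are not avoided but \emph{arranged}: one transports the overtwisted disk around a loop in the base using the contact connection, and the nontrivial monodromy produces self--intersections confined to an annulus $\{r\in(\varepsilon,1-\varepsilon)\}$ at matching values of $\theta$, with a clean radial strip left over. Your ansatz with a constant twist $\nu$ is too rigid to achieve this localisation; the fix is to let the twisting be supported away from the core and the boundary, which is precisely what the monodromy argument in \cite{NP} accomplishes.
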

\section{Symplectization of an overtwisted structure}\label{sec:sympOT}
\noindent In this section we prove Theorem \ref{thm:main}. Let $(\R^3,\ker\alpha_{ot})$ be an overtwisted contact structure. The idea is simple: the contactization $\SC(\R^4,e^t\alpha_{ot})$ of the exact symplectic manifold $(\R^4,d(e^t\alpha_{ot}))$ is not contactomorphic to $(\R^5,\xi_0)\cong\SC\SS(\R^3,\xi_0)$. Indeed, it will be proven that $\SC(\R^4,e^t\alpha_{ot})$ does not embed into $(\S^5,\xi_0)$ whereas $(\R^5,\xi_0)$ does. The geometric model is that of Subsection \ref{ssec:diskCS} and thus $(\R^4,e^t\alpha_{ot})$ is seen as $(\R^3\times\R^+,t\alpha_{ot})$.

\begin{lemma}\label{lem:m}
Let $(p,r,\theta)\in\R^3\times\R^2$ be polar coordinates and $L=\{(p,r,\theta):r\geq0,\theta=0\}$. There exists a contactomorphism
$$\Phi:(\R^3\times(\R^2\setminus L),\ker\{\alpha_{ot}+r^2d\theta\})\longrightarrow\SC(\R^3\times\R^+,t\alpha_{ot}).$$
\end{lemma}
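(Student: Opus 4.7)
The plan is to deduce this lemma directly from Proposition \ref{prop:contfib} by specializing to $(M,\xi)=(\R^3,\ker\alpha_{ot})$ with contact form $\alpha=\alpha_{ot}$. First I would unpack the target of $\Phi$: by Definition \ref{def:symp} the symplectization $\SS(\R^3,\alpha_{ot})$ is the exact symplectic manifold $(\R^3\times\R^+(t),d(t\alpha_{ot}))$ with preferred Liouville primitive $\lambda=t\alpha_{ot}$, so by Definition \ref{def:cont} its contactization is
$$\SC(\R^3\times\R^+,t\alpha_{ot})=(\R^3\times\R^+(t)\times\R(s),\ker\{t\alpha_{ot}-ds\}),$$
which is precisely the right-hand contact manifold of the diagram in Proposition \ref{prop:contfib}.

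Next I would identify the source. Standard polar coordinates $(r,\theta)$ on $\R^2$, with the cut along the non-negative $x$-axis $L=\{\theta=0,\,r\geq 0\}$ removed, yield a diffeomorphism $\R^2\setminus L\cong \R^+(r)\times(0,2\pi)(\theta)$, under which the contact form $\alpha_{ot}+r^2d\theta$ on $\R^3\times(\R^2\setminus L)$ matches the left-hand contact form of Proposition \ref{prop:contfib} exactly. With both sides now in the form treated by the proposition, I would define $\Phi$ as the contactomorphism $G$ produced there, so that $\Phi=(\mathrm{id}_{\R^3},g)$ with $g$ the explicit coordinate change $(r,\theta)\mapsto(t,s)$ given in the proof of Proposition \ref{prop:contfib}; commutativity with projection onto the disk factor is automatic.

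I do not anticipate any genuine obstacle here: the lemma is essentially a re-labeling of Proposition \ref{prop:contfib} after the two definitional identifications above. The only minor points I would double-check are that polar coordinates on $\R^2\setminus L$ give a genuine diffeomorphism to $\R^+\times(0,2\pi)$ (standard), and that the pull-back computation $G^*(t\alpha_{ot}-ds)=\text{(nonzero factor)}\cdot(\alpha_{ot}+r^2d\theta)$ recorded in Proposition \ref{prop:contfib} transfers verbatim with $\alpha_{ot}$ in place of a general $\alpha$, which it does since that computation uses only the two-form $d\theta$ and the fiber variable $t$.
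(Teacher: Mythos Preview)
Your proposal is correct and follows essentially the same route as the paper: both specialize Proposition~\ref{prop:contfib} to $(M,\alpha)=(\R^3,\alpha_{ot})$, identify $\R^2\setminus L$ with $\R^+\times(0,2\pi)$ via polar coordinates, unpack $\SC(\R^3\times\R^+,t\alpha_{ot})$ as $(\R^3\times\R^+\times\R,\ker\{t\alpha_{ot}-ds\})$, and take $\Phi$ to be the map $G$ (the paper phrases it via $G^{-1}$ in the opposite direction and rescales the contact form to $\alpha_{ot}-(1/t)ds$, but this is the same argument).
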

\begin{proof}
Consider the map $G$ in the proof of Proposition \ref{prop:contfib}. The contactization $\SC(\R^3\times\R^+,t\alpha_{ot})$ is contactomorphic to
$$\SC(\R^3\times\R^+,t\alpha_{ot})=\left(\R^3\times\R^+\times\R(s),\ker\{\alpha_{ot}-(1/t)ds\}\right)\stackrel{G^{-1}}{\longrightarrow}(\R^3\times\R^+\times(0,2\pi),\ker\{\alpha_{ot}+r^2d\theta\})$$
which is $(\R^3\times(\R^2\setminus L),\ker\{\alpha_{ot}+r^2d\theta\})$.
\end{proof}

\begin{lemma}\label{lem:s}
Let $(p,x,y)=(p,r,\theta)\in\R^3\times\R^2$ be cartesian and polar coordinates. There exists a strict contactomorphism
$$\Psi:(\R^3\times\R^2,\ker\{\alpha_{ot}+r^2d\theta\})\longrightarrow(\R^3\times\R^2,\ker\{\alpha_{ot}-ydx\}).$$
which preserves the fibres of the projection onto the second factor.
\end{lemma}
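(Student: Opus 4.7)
My plan is to construct $\Psi$ as a fibred diffeomorphism of the form $\Psi(p,x,y) = (F(p,x,y),\psi(x,y))$, where $\psi$ is a diffeomorphism of the base $\R^2$ and $F(\cdot,x,y)$ is a smooth family of self-diffeomorphisms of $\R^3$. Any such $\Psi$ automatically maps fibres of the projection onto the second factor to fibres. The strict contactomorphism condition $\Psi^*(\alpha_{ot} - \tilde y\,d\tilde x) = \alpha_{ot} + r^2 d\theta$ then reduces, using $r^2 d\theta = x\,dy - y\,dx$, to the identity
\[
F^*\alpha_{ot} + \psi^*(-\tilde y\,d\tilde x) \;=\; \alpha_{ot} + x\,dy - y\,dx
\]
of $1$-forms on $\R^3 \times \R^2$, where $(\tilde x,\tilde y)$ denote the target coordinates.

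First, I would pick $\psi$ to reconcile the different normalizations of the two base primitives, namely $d(r^2 d\theta) = 2\,dx\wedge dy$ versus $d(-y\,dx) = dx\wedge dy$. Taking $\psi(x,y) = (x,2y)$ yields $\psi^*(-\tilde y\,d\tilde x) = -2y\,dx$, which rearranges the required identity into
\[
F^*\alpha_{ot} - \alpha_{ot} \;=\; (x\,dy - y\,dx) - (-2y\,dx) \;=\; d(xy),
\]
whose right-hand side is exact on $\R^2$. The base diffeomorphism $\psi$ is tailored precisely so that the residual discrepancy between the two base primitives becomes an exact $1$-form.

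Next, I would absorb $d(xy)$ through the Reeb flow of $\alpha_{ot}$. Writing $R$ for the Reeb vector field and $\phi^R_t$ for its flow, the identities $\alpha_{ot}(R)=1$ and $\mathcal{L}_R \alpha_{ot}=0$ give, for $F(p,x,y) := \phi^R_{xy}(p)$, the pullback formula $F^*\alpha_{ot} = \alpha_{ot} + d(xy)$ as $1$-forms on $\R^3 \times \R^2$. Combining both ingredients yields the candidate map
\[
\Psi(p,x,y) \;=\; (\phi^R_{xy}(p),\; x,\; 2y),
\]
whose smooth inverse is $(p',\tilde x,\tilde y) \mapsto (\phi^R_{-\tilde x \tilde y/2}(p'),\, \tilde x,\, \tilde y/2)$, and the pullback computation above then delivers the strict contactomorphism.

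The main technical subtlety will be ensuring that the time-$xy$ Reeb flow $\phi^R_{xy}$ is defined on all of $\R^3 \times \R^2$, which requires $R$ to be complete. Since the statement depends only on the overtwisted contact structure $\xi_{ot} = \ker\alpha_{ot}$ and positive rescaling of $\alpha_{ot}$ preserves $\xi_{ot}$, one is free to replace $\alpha_{ot}$ by a representative of the overtwisted class whose Reeb vector field is complete. Modulo this mild reduction, the construction above provides the strict contactomorphism preserving the fibres of the projection onto the second factor.
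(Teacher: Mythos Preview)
Your proof is correct and follows essentially the same idea as the paper's: both absorb the discrepancy between the two primitives by flowing along the Reeb vector field $R_{ot}$ for time proportional to $xy$, arriving at a map of the form $\Psi(p,x,y)=(\phi^{R_{ot}}_{c\,xy}(p),\psi(x,y))$. The paper obtains this map via a Gray--Moser argument (homotoping $\beta_t=\alpha_{ot}-y\,dx+\tfrac{1-t}{2}(x\,dy+y\,dx)$ and solving for the generating vector field $X=\tfrac{xy}{2}R_{ot}$), whereas you write the map down directly; the only substantive difference is that you insert the base dilation $\psi(x,y)=(x,2y)$ to match $d(r^2d\theta)=2\,dx\wedge dy$ against $d(-y\,dx)=dx\wedge dy$, while the paper keeps the identity on the base and works with the convention $r^2d\theta=\tfrac12(x\,dy-y\,dx)$. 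Your remark on completeness of $R_{ot}$ is exactly the point behind the paper's assertion that ``$X$ is a complete vector field in $\R^3\times\R^2$''.
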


\begin{proof}
The contact form $\alpha_{ot}+r^2d\theta$ in Cartesian coordinates reads $\beta_0=\alpha_{ot}+\frac{1}{2}(xdy-ydx)$. Consider the homotopy of contact forms
$$\beta_t=\alpha_{ot}-ydx+\frac{1-t}{2}(xdy+ydx),\quad t\in[0,1].$$
It begins at $\beta_0$ and ends at $\beta_1=\alpha_{ot}-ydx$. Let us find an isotopy $\tau_t$ solving the equation $\tau_t^*\beta_t=0$. Suppose that $\tau_t$ is the $t$--time flow of a vector field $X_t$. The derivative of the equation reads
$$\tau_t^*(\SL_{X_t}\beta_t+\dot{\beta}_t)=0,\mbox{ i.e. }d\iota_{X_t}\alpha_t+\iota_{X_t}d\alpha_t-\frac{1}{2}(xdy+ydx)=0.$$
Note that $d(xy)=xdy+ydx$ and thus the autonomous vector field
$$X=\frac{xy}{2}R_{ot}$$
is the solution to this equation, where $R_{ot}$ denotes the Reeb vector field of $\alpha_{ot}$. The vector field $X$ is a complete vector field in $\R^3\times\R^2$. Let $\tau$ be its 1--time flow. Then the diffeomorphism
$$\Psi:\R^3\times\R^2\longrightarrow\R^3\times\R^2,\quad\Psi(p,x,y)=(\tau(p,x,y),x,y)$$
satisfies $\Psi^*(\alpha_{ot}-ydx)=\alpha_{ot}+r^2d\theta$.
\end{proof}

\begin{theorem}\label{thm:otnotinstd}
There exists no contact embedding $\SC\SS(\R^3,\alpha_{ot})\longrightarrow\SC\SS(\R^3,\alpha_0)$.
\end{theorem}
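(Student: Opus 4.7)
The plan is to argue by contradiction and transport the non--fillability obstruction of Theorem \ref{thm:np} into the standard contact sphere $(\S^5,\xi_0)$. Suppose a contact embedding $j:\SC\SS(\R^3,\alpha_{ot})\hookrightarrow\SC\SS(\R^3,\alpha_0)$ exists. By Example \ref{ex} the symplectization $\SS(\R^3,\alpha_0)$ is symplectomorphic to $(\R^4,\omega_0)$, so its contactization is contactomorphic to the standard $(\R^5,\xi_0)$. By Remark \ref{rmk:sphere} this embeds contactly into $(\S^5,\xi_0)$, and composing with $j$ yields a contact embedding $\widetilde{\jmath}:\SC\SS(\R^3,\alpha_{ot})\hookrightarrow(\S^5,\xi_0)$.

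Next, I apply Lemma \ref{lem:m} to identify $\SC\SS(\R^3,\alpha_{ot})$ with the open contact manifold $(\R^3\times(\R^2\setminus L),\ker\{\alpha_{ot}+r^2d\theta\})$. This is the central point: in these coordinates the underlying contact form has a very symmetric shape, and it extends smoothly to the closed manifold $(\R^3\times\R^2,\ker\{\alpha_{ot}+r^2d\theta\})$ in which Proposition \ref{pro:GPS} produces a GPS--structure inside $\R^3\times\D^2(R)$ for $R$ sufficiently large. The final preparatory step is to promote this GPS--structure to one inside $\SC\SS(\R^3,\alpha_{ot})$ itself, i.e.\ to arrange that its image avoid the ray $L=\{r\geq 0,\theta=0\}$.

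To do this I exploit that the form $\alpha_{ot}+r^2d\theta$ is invariant under the $\S^1$--action by rotations of the $\R^2$--factor, so that post--composing the GPS immersion $\iota:\S^1\times\D^2\longrightarrow\R^3\times\D^2(R)$ with a rotation $R_\phi$ of the second factor yields again a GPS--structure. Since $\iota(\S^1\times\D^2)$ is compact, a generic choice of $\phi$ puts the resulting image inside $\R^3\times(\D^2(R)\setminus L)\subset\R^3\times(\R^2\setminus L)$; hence $\SC\SS(\R^3,\alpha_{ot})$ itself carries a GPS--structure. Transporting this GPS through $\widetilde{\jmath}$ produces a GPS--structure inside $(\S^5,\xi_0)$. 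But $(\S^5,\xi_0)$ bounds the standard symplectic ball $(\DD^6,\omega_0)$, an exact symplectic filling, contradicting Theorem \ref{thm:np}.

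The single delicate step is the rotational repositioning argument of the previous paragraph: one must check that the image of the GPS--structure in $\R^2$ is not forced to meet every rotate of $L$ simultaneously. This is where the specific shape of the GPS--structure built in Proposition \ref{pro:GPS}, in particular the fact that it is supported in an annular region in the $\R^2$--factor bounded away from the origin, is used. Once this is in hand, the rest of the argument is a purely formal assembly of Lemma \ref{lem:m}, Proposition \ref{pro:GPS} and Theorem \ref{thm:np}.
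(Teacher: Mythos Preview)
Your overall architecture matches the paper's: assume an embedding, compose into $(\S^5,\xi_0)$, transport a GPS from $\SN_R=(\R^3\times\D^2(R),\ker\{\alpha_{ot}+r^2d\theta\})$ into the sphere via $\SC\SS(\R^3,\alpha_{ot})$, and contradict Theorem~\ref{thm:np}. The only substantive step is moving the GPS off the ray $L$, and this is exactly where your argument breaks.

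The rotation argument does not work. The form $\alpha_{ot}+r^2d\theta$ is indeed $\S^1$--invariant, but the $\D^2(R)$--projection of the GPS built in Proposition~\ref{pro:GPS} is not contained in any sector. In the construction of \cite{Pr,NP} the $\S^1$--factor of $\S^1\times\D^2$ is sent to a loop in the base $\D^2(R)$ enclosing large symplectic area---this is precisely why $R$ must be large---and the projection of the GPS is a neighbourhood of that loop. An annulus about the origin, exactly the ``annular region bounded away from the origin'' you invoke, meets \emph{every} ray through the origin, so no rotation $R_\phi$ moves it into $\R^2\setminus L$. Compactness is irrelevant here; the obstruction is that the projection has nonzero winding number about $0\in\R^2$, and rotations preserve winding number.

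The paper circumvents this with Lemma~\ref{lem:s}, which you did not use. That lemma gives a fibre--preserving strict contactomorphism $\Psi$ from $\ker\{\alpha_{ot}+r^2d\theta\}$ to $\ker\{\alpha_{ot}-ydx\}$ on $\R^3\times\R^2$. In the Cartesian model the horizontal translations $f_c(x,y)=(x-c,y)$ preserve $\alpha_{ot}-ydx$ strictly, and choosing $c>R_0$ carries the whole disk $\D^2(R_0)$---hence $\SN_{R_0}$ and whatever it contains---into $\R^3\times(\R^2\setminus L)$, regardless of winding. Conjugating back through $\Psi^{-1}$ and $\Phi$ yields the required contact embedding $\SN_{R_0}\hookrightarrow\SC\SS(\R^3,\alpha_{ot})$. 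Translation in the Cartesian model, not rotation in the polar one, is the missing idea.
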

\begin{proof}
The contact manifold $\SC\SS(\R^3,\alpha_0)$ is contactomorphic to $(\R^5,\ker\alpha_0)$. Thus it embeds via the inclusion $j$ into $(\S^5,\ker\alpha_0)$. The contact manifold $(\S^5,\ker\alpha_0)$ admits an exact symplectic filling by the standard symplectic ball $(\D^6,\omega_0|_{\D^6})$. Suppose that there exists a contact embedding
$$h:\SC\SS(\R^3,\alpha_{ot})\longrightarrow\SC\SS(\R^3,\alpha_0).$$
Proposition \ref{pro:GPS} implies the existence of a GPS--structure on the contact manifold
$$\SN_R=(\R^3\times\D^2(R),\ker\{\alpha_{ot}+r^2d\theta\})$$
for $R$ large enough. Let us show that $\SN_R$ contact embeds into $\SC\SS(\R^3,\alpha_{ot})$. Lemma \ref{lem:m} identifies this contactization via $\Phi$ with $(\R^3\times(\R^2\setminus L),\ker\{\alpha_{ot}+r^2d\theta\})$. The contactomorphism $\Psi$ in Lemma \ref{lem:s} allows us to use $(\R^3\times(\R^2\setminus L),\ker\{\alpha_{ot}-ydx\})$.\\

Consider an arbitrary $R_0\in\R^+$, the inclusion $i:\D^2(R_0)\longrightarrow\R^2(x,y)$ as a disk centered at the origin and the diffeomorphism $f_R\in\Diff(\R^2)$ defined as $f_R(x,y)=(x-R,y)$. The image of $\SN_{R_0}$ via the contact embedding $(id,f_R\circ i)$ is contained in $\R^3\times(\R^2\setminus L)$ if $R>R_0$. It is readily verified that
$$\gamma:\Phi\circ\Psi^{-1}\circ (id,f_{2R_0}\circ i):\SN_{R_0}\longrightarrow\SC\SS(\R^3,\alpha_{ot})$$
is a contact embedding. The radius $R_0$ can be chosen arbitrarily large. The map $j\circ h\circ \gamma$ endows $(\S^5,\ker\alpha_0)$ with a GPS--structure. This contradicts Theorem \ref{thm:np}.
\end{proof}


\noindent{\it Proof of Theorem \ref{thm:main}}: Suppose that symplectic structure $\SS(\R^3,\alpha_{ot})$ is not exotic. Then there exists an embedding $i:\SS(\R^3,\alpha_{ot})\longrightarrow \SS(\R^3,\alpha_0)$. It induces a contact embedding
$$j:\SC\SS(\R^3,\alpha_{ot})\longrightarrow\SC\SS(\R^3,\alpha_0).$$
This contradicts Theorem \ref{thm:otnotinstd}.\hfill$\Box$\\

Note that the symplectic structure $\SS(\R^3,\xi_{ot})$ is never standard at infinity. It has been proven by M. Gromov that a symplectic structure on $\R^4$ standard at infinity is necessarily isomorphic to the standard symplectic structure $(\R^4,\omega_0)$.\\

\noindent The contact structures $\xi_0$ and $\xi_1$ on $\R^3$ are homotopic through contact structures. This homotopy can be obtained by dilating the overtwisted disks off to infinity. This geometric path of contact structures yields a path of exact symplectic forms joining the standard symplectic structure $\omega_0$ and the symplectic structure on $\SS(\R^3,\xi_1)$. A visual homotopy between $\xi_0$ and $\zeta_k$ can be readily constructed using contractions to a Darboux ball. This also induces a homotopy between $\omega_0$ and the symplectic form of $\SS(\R^3,\zeta_k)$.
\section{Examples of Non--Isomorphic Symplectizations}\label{sec:EX}
In this Section we provide details on Theorem \ref{thm:openmfld} and Theorem \ref{thm:stable}.
\subsection{Open contact 3--folds}\label{ssec:open}
\noindent In Section \ref{sec:ot} we have shown that $\SS(\R^3,\alpha_0)$ is not symplectomorphic to $\SS(\R^3,\alpha_{ot})$. The procedure we used yields several examples of open manifolds exhibiting this behaviour. In particular Theorem \ref{thm:openmfld} stated in the introduction.
\begin{customthm}{3}\label{thm:openmfld}{\it
Let $(M,\xi)$ be an exact symplectically fillable contact manifold and $(U,\xi)\subset (M,\xi)$ an open contact submanifold. Consider an overtwisted contact structure $(U,\xi_ {ot})$. Then $\SS(U,\xi)$ is not symplectomorphic to $\SS(U,\xi_{ot})$.}
\end{customthm}
\begin{proof}
Consider an exact symplectic filling $(W,\lambda)$ for $(M,\xi)$, $\xi=\ker\alpha$. Note that $\SS(M,\xi)$ embeds into $(W,\lambda)$ as a neighborhood of the boundary. The contact 5--fold $\SC(W,\lambda)=(W\times\S^1,\lambda-ds)$ has boundary $M\times\S^1$. In order to obtain a closed contact 5--fold $(X,\Xi)$ we glue the manifold $(M\times\D^2,\alpha+\rho^2d\varphi)$ along their common boundary $M\times\S^1$. The manifold $(X,\Xi)$ admits an exact symplectic filling.\\

Observe that the open contact manifold $(U,\xi)$ embeds into $(X,\Xi)$ with an arbitrarily large neighborhood. Indeed, $(M,\xi)$ has an arbitrarily large symplectic neighborhood in $(W,\lambda)$. For instance, it can be obtained by expanding a given neighborhood with the Liouville flow.\\

The open contact manifold $\SC\SS(U,\xi_{ot})$ contains a GPS--structure. Suppose that $\SS(U,\xi)$ is symplectomorphic to $\SS(U,\xi_{ot})$, then $\SS(U,\xi_{ot})$ embeds into $(W,\lambda)$. Hence the contact manifold $\SC\SS(U,\xi_{ot})$ embeds into $(X,\Xi)$. This contradicts Theorem \ref{thm:np}.
\end{proof}

\begin{remark}
The manifold $(X,\Xi)$ used in the proof is not unique. The relative suspension using a composition of positive Dehn twists also yields an exact symplectically fillable manifold and the argument applies.
\end{remark}

\subsection{Higher Dimensions}\label{ssec:hd}
Consider an overtwisted contact structure $(\R^3,\xi_{ot})$ and polar coordinates $(\rho_1,\varphi_1,\ldots,\rho_{n-2},\varphi_{n-2})\in\R^{2n-4}$. The contact structure $\xi_{ex}$ defined by the kernel of the $1$--form
$$\alpha_{ex}=\alpha_{ot}+\sum_{i=1}^{n-2}\rho_i^2d\varphi_i$$
contains a GPS--structure. Thus it is not contactomorphic to $(\R^{2n-1},\xi_0)$. The statement of Proposition \ref{pro:GPS} also holds for the contact manifold $(\R^{2n-1},\xi_{ex})$. That is, there exists a GPS--structure on $(\R^{2n-1}\times\D^2(R),\alpha_{ex}+r^2d\theta)$. Confer \cite{NP} for details. The existence of this GPS--structure and the analogues of Lemmas \ref{lem:m} and \ref{lem:s} prove that $\SC\SS(\R^{2n-1},\alpha_{ex})$ does not contact embed into $\SC\SS(\R^{2n-1},\alpha_0)$. The same argument used in Theorem \ref{thm:main} yields the following
\begin{proposition}\label{prop:higher}
Let $(\R^3,\xi_{ot})$ be an overtwisted contact structure, $\xi_{ot}=\ker\alpha_{ot}$. Then the symplectization $\SS(\R^{2n-1},\alpha_{ex})$ endows $\R^{2n}$ with an exotic symplectic structure.\hfill$\Box$
\end{proposition}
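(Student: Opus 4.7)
The plan is to mimic verbatim the argument used for Theorem \ref{thm:main}, with $(\R^3,\alpha_{ot})$ replaced by $(\R^{2n-1},\alpha_{ex})$ throughout, and to invoke the higher-dimensional results already cited in the paragraph preceding the statement.

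First I would argue by contradiction: suppose $\SS(\R^{2n-1},\alpha_{ex})$ is not exotic, so that there exists a symplectic embedding $i:\SS(\R^{2n-1},\alpha_{ex})\longrightarrow(\R^{2n},\omega_0)$. By Example \ref{ex} the target is symplectomorphic to $\SS(\R^{2n-1},\alpha_0)$, and passing to contactizations produces a contact embedding
$$j:\SC\SS(\R^{2n-1},\alpha_{ex})\longrightarrow\SC\SS(\R^{2n-1},\alpha_0).$$
As in the proof of Theorem \ref{thm:otnotinstd}, the right-hand side is contactomorphic to $(\R^{2n+1},\xi_0)$, which embeds into $(\S^{2n+1},\xi_0)$ by Remark \ref{rmk:sphere}, and the latter admits an exact symplectic filling by the standard ball $(\D^{2n+2},\omega_0|_{\D^{2n+2}})$.

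Next I would produce a GPS-structure inside the source of $j$. The higher-dimensional analogues of Lemmas \ref{lem:m} and \ref{lem:s} (which the author already asserts hold in this setting) give a contact identification
$$\SC\SS(\R^{2n-1},\alpha_{ex})\;\cong\;(\R^{2n-1}\times(\R^2\setminus L),\ker\{\alpha_{ex}+r^2d\theta\})\;\stackrel{\Psi}{\cong}\;(\R^{2n-1}\times(\R^2\setminus L),\ker\{\alpha_{ex}-ydx\}).$$
For any prescribed $R_0\in\R^+$ the composition of the inclusion $i:\D^2(R_0)\hookrightarrow\R^2(x,y)$ with a large translation $f_R(x,y)=(x-R,y)$, $R>R_0$, embeds the disk into $\R^2\setminus L$. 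Pulling this back through the contact identifications above yields a contact embedding
$$\gamma:\SN_{R_0}:=(\R^{2n-1}\times\D^2(R_0),\ker\{\alpha_{ex}+r^2d\theta\})\longrightarrow\SC\SS(\R^{2n-1},\alpha_{ex}),$$
and the stated higher-dimensional version of Proposition \ref{pro:GPS} guarantees that, for $R_0$ sufficiently large, $\SN_{R_0}$ carries a GPS-structure.

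Finally, composing this GPS-structure with $j$ and the inclusion into $(\S^{2n+1},\xi_0)$ would yield a GPS-structure in a contact manifold admitting an exact symplectic filling, directly contradicting Theorem \ref{thm:np}. The one non-routine input, and therefore the main obstacle, is the higher-dimensional version of Proposition \ref{pro:GPS}: namely, constructing a GPS-structure inside $(\R^{2n-1}\times\D^2(R),\alpha_{ex}+r^2d\theta)$. The author defers this to \cite{NP}; strategically it uses the fact that $\alpha_{ex}$ is built by stabilizing an overtwisted $\alpha_{ot}$ with Darboux factors $\rho_i^2d\varphi_i$, so the $5$-dimensional construction in \cite{Pr,NP} extends by taking the overtwisted disk in the $\R^3$-slice and crossing with a small Darboux polydisk in the remaining coordinates, which after the contact fibration procedure of Section \ref{ssec:diskCS} yields the desired immersed plastikstufe. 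Every other step is a direct transcription of the $n=2$ argument.
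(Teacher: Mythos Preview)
Your proposal is correct and follows essentially the same route as the paper: the paragraph preceding the proposition already asserts the higher-dimensional analogues of Lemmas \ref{lem:m}, \ref{lem:s} and Proposition \ref{pro:GPS} (deferring the latter to \cite{NP}), and then explicitly says ``the same argument used in Theorem \ref{thm:main}'' yields the result. Your write-up simply unwinds that sentence in detail, and your identification of the higher-dimensional GPS construction as the only non-routine input matches the paper's own deferral to \cite{NP}.
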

This allows us to conclude Theorem \ref{thm:stable} stated in the introduction.\\

{\it Proof of Theorem \ref{thm:stable}}: Consider the diffeomorphism
$$f:\R^{2n}\longrightarrow\R^{2n},\quad f(\rho,\varphi,z,t;\rho_1,\varphi_1,\ldots,\rho_{n-2},\varphi_{n-2})=(\rho,\varphi,z,t;e^{t/2}\rho_1,\varphi_1,\ldots,e^{t/2}\rho_{n-2},\varphi_{n-2}).$$
Consider the 1--forms
$$\widetilde\lambda_{ex}=e^t\alpha_{ot}+\sum_{i=1}^{n-2}\rho^2_id\varphi_i,\quad \lambda_{ex}=e^t\alpha_{ex}=e^t(\alpha_{ot}+\sum_{i=1}^{n-2}\rho^2_id\varphi_i)$$
The diffeomorphism $f$ pulls--back $f^*\widetilde\lambda_{ex}=\lambda_{ex}$. In particular
$$(\SS(\R^3,\xi_{ot})\times\R^{2n-4},\omega_{ot}+\omega_0)\cong\SS(\R^{2n-1},\alpha_{ex})$$
are symplectomorphic. This concludes the statement.\hfill$\Box$\\

Proposition \ref{prop:higher} can also be used to prove an analogue of Theorem \ref{thm:openmfld} in higher dimensions.

\end{document}